\providecommand\@dotsep{5}
\def\listtodoname{List of Todos}
\def\listoftodos{\@starttoc{tdo}\listtodoname}
\numberwithin{equation}{section}
\newtheorem{theorem}{Theorem}[section]
\newtheorem{proposition}[theorem]{Proposition}
\newtheorem{lemma}[theorem]{Lemma}
\begin{document}

\title[Fractional Kirchhoff-type and method of sub-supersolutions]{Fractional Kirchhoff-type and method of sub-supersolutions}

\author{J. Vanterler da C. Sousa}

\address[J. Vanterler da C. Sousa]
{\newline\indent Aerospace Engineering, PPGEA-UEMA
\newline\indent
Department of Mathematics, DEMATI-UEMA
\newline\indent
São Luís, MA 65054, Brazil.}
\email{\href{vanterler@ime.unicamp.br,vanterlermatematico@hotmail.com}{vanterler@ime.unicamp.br,vanterlermatematico@hotmail.com}}

\pretolerance10000


\begin{abstract} In the present paper, we are interested in investigating the existence of positive solutions of a new class of fractional Kirchhoff via the sub and supersolutions technique. For this, we first need to investigate two results through lemmas.
\end{abstract}
\subjclass[2010]{35R11,35D05,35J60} 
\keywords{Fractional differential equation, Kirchhoff type problems, infinite semipositone problem, positive solution, sub and supersolutions.}

\maketitle
\section{Introduction and motivation}

This paper concerns with a new class of Kirchhoff-type fractional problems given by {\color{red}
\begin{equation}\label{eq1}
\left\{ 
\begin{array}{rcl}
\mathfrak{M}\left(\displaystyle\int_{\Lambda} \left\vert ^{\rm H}\mathfrak{D}_{0+}^{\alpha ,\beta ;\psi }\theta_{1}\right\vert ^{2} d\xi\right)\,\, ^{\rm H}\mathfrak{D}_{T}^{\alpha ,\beta ;\psi } \left(^{\rm H}\mathfrak{D}_{0+}^{\alpha ,\beta ;\psi }\theta_{1} \right)&=& \lambda \left(h(\theta_{1}) - \dfrac{1}{\theta_{1}^{\nu}} \right),\,\,in \,\,\Lambda=(0,T)\times(0,T)\\
\theta_{1} &=&0,\,\, on\,\, \xi\in\partial\Lambda
\end{array}%
\right.
\end{equation}}
where $\Lambda$ is a bounded interval in $\mathbb{R}^{2}$ with $C^{2+\gamma}$ boundary for some $0<\gamma<1$, $0<\nu<1$, $\mathfrak{M}:\mathbb{R}_{0}^{+}\rightarrow\mathbb{R}^{+}$ is a continuous and increasing function, $\lambda>0$ and $h:[0,\infty)\rightarrow \mathbb{R}$ is a continuous and nondecreasing function which is asymptotically $p$-linear at $\infty$. Furthermore, ${}^{\rm H}\mathfrak{D}^{\alpha,\beta;\psi}_{0+}(\cdot)$ and ${}^{\rm H}\mathfrak{D}^{\alpha,\beta;\psi}_{T}(\cdot)$ are the $\psi$-Hilfer fractional derivative (left-side and right-side) of order $\frac{1}{2}<\alpha\leq 1$ and type $0\leq \beta \leq 1$.

The theory of fractional differential equations involving $p$-Laplacian has gained prominence in recent years, particularly involving $\psi$-fractional operators \cite{Ezati,Ezati1,Nyamoradi,Nyamoradi1,Nori,Sousa4,Sousa80,Sousa3,Sousa,Sousa2,Sousa1}. The Kirchhoff-type problems have been investigated and discussed by several researchers, who can be consulted in the works
\cite{Goodrich,Ricceri,Sun12}. On the other hand, Kirchhoff's works via fractional operators have also gained prominence over the decades \cite{be1,be2,be3,be4}. Kirchhoff's model takes into account the changes in string length produced by transverse vibrations. The first classical studies devoted to Kirchhoff's equations were given by Bernstein
\cite{Bernstein} and Pohozaev \cite{Pohozaev}. See also the work of Lions \cite{Lions}, where he presents an abstract framework for the Kirchhoff model.

In 2005, Alves et al. \cite{Alves}, consider the existence of positive solutions for a quasilinear elliptic equation of Kirchhoff type given by
\begin{equation}\label{exemplo1}
    -M\left(\int_{\Omega} |\nabla |^{2} dx\right) \Delta u = f(x,u),\,\, in \,\,\Omega,\,\,u=0,\,\, on\,\partial\Omega.
\end{equation}
For more details on parameters and functions, see \cite{Alves}. The problem (\ref{exemplo1}) models different types of physical and biological systems. There are other important works of great relevance on Kirchhoff-type problems and can be obtained in \cite{Chung,Goodrich,novo12,novo11}.

Variational techniques to investigate the existence and multiplicity of solutions to Kirchhoff-type problems are indeed of great relevance. The sub and supersolution methods are another tool used to investigate the existence of positive solutions.

In 2014 Afrouzi et al. \cite{Afrouzi2} investigated the existence of positive solutions for the following Kirchhoff problems
\begin{equation}\label{eq111}
\left\{ 
\begin{array}{rcl}
-M\left(\displaystyle\int_{\Omega} |\nabla u|^{p} dx \right) div (|\nabla|^{p-2}\, \nabla u)&=&\lambda a(x)h(\theta_{1})-\mu,\,\, in\,\,\Omega  \\
u &=&0\,\, on\,\,x\in \partial\Omega.
\end{array}%
\right.
\end{equation}
using the sub and supersolution method. Other works on Kirchhoff-type problems using the sub and supersolution technique, see \cite{Zahmatkesh,Afrouzi,Yan,Han}.

Problem of type (\ref{eq111}) is related to the stationary version of the Kirchhoff equation
\begin{equation*}
    \rho \frac{\partial^{2} u}{ \partial t{2}}- \left(\frac{P_{0}}{h}+ \frac{E}{2L} \int_{0}^{L} |\partial u/ \partial x|^{2} dx \right) \frac{\partial^{2}u}{\partial x^{2}}=0
\end{equation*}
presented bt Kirchhoff in 1883 \cite{Kirchhoff}. This equation is an extension of the classical d'Alembert's wave equation by considering the effects of the changes in the lenght of the string during the vibrations.

Consider the following condition on the functions $\mathfrak{M}$ and $h$ given by:

($H_{1}$) $\zeta_{0}\leq \mathfrak{M}(t)\leq \zeta_{\infty}$,  $\forall t\in \mathbb{R}_{0}^{+}$, where $\mathbb{R}_{0}^{+}=[0,\infty)$,

($H_{2}$) $h\in C^{2}(0,\infty)$, $h(0) \geq 0$, $h'>0$, $\lim_{s\rightarrow\infty} \dfrac{h(s)}{s}=0$.

Motivated by the problems (\ref{exemplo1}) and (\ref{eq111}), and by questions open in theory, in this paper, we have as main result, to investigate the following result:

\begin{theorem}\label{principal} Consider $(H_{1})$ and $(H_{2})$ hold, and $h(0)<0$. Hence there exists positive constants $\mu_{1},\mu_{2}$ such that $\mu_{1}<\mu_{2}$ and the problem {\rm (\ref{eq1})} has no positive solution for $\lambda<\mu_{1}$ and has at least one positive solution for $\lambda>\mu_{2}$.
\end{theorem}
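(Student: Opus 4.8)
The plan is to obtain the two thresholds separately: a nonexistence bound $\mu_1$ from an eigenfunction test, and an existence bound $\mu_2$ from the sub--supersolution machinery of the preceding lemmas. Throughout I write $f(s)=h(s)-s^{-\nu}$ for the reaction term, and let $(\lambda_1,\varphi_1)$ denote the principal eigenpair of the linear operator $L_0(\cdot):={}^{\rm H}\mathfrak{D}^{\alpha,\beta;\psi}_{T}\big({}^{\rm H}\mathfrak{D}^{\alpha,\beta;\psi}_{0+}(\cdot)\big)$ under the Dirichlet condition on $\partial\Lambda$, normalized so that $\varphi_1>0$ in $\Lambda$; its existence, simplicity, self-adjointness, and boundary behaviour (a Hopf-type lower bound ${}^{\rm H}\mathfrak{D}^{\alpha,\beta;\psi}_{0+}\varphi_1\ge c_0>0$ near $\partial\Lambda$) are precisely the content of the two lemmas announced above. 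From $(H_2)$ together with $h(0)<0$ one checks that $f$ is strictly increasing with a unique zero $s_0>0$, that $f(s)/s\to 0$ as $s\to\infty$, and that $f(s)/s\to-\infty$ as $s\to 0^+$; hence $\kappa:=\sup_{s>0} f(s)/s$ is finite and positive, and in particular $f(s)\le \kappa s$ for all $s>0$.

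For the nonexistence statement I would argue by contradiction. If $\theta_1>0$ solves (\ref{eq1}), I test the equation against $\varphi_1$ and integrate by parts using self-adjointness of $L_0$, obtaining $\mathfrak{M}(\cdots)\,\lambda_1\!\int_\Lambda \theta_1\varphi_1\,d\xi=\lambda\!\int_\Lambda f(\theta_1)\varphi_1\,d\xi$, where $\mathfrak{M}(\cdots)$ is the Kirchhoff coefficient evaluated at $\theta_1$. Using $(H_1)$ on the left and the global bound $f(s)\le\kappa s$ on the right gives $\zeta_0\lambda_1\!\int_\Lambda\theta_1\varphi_1\,d\xi\le\lambda\kappa\!\int_\Lambda\theta_1\varphi_1\,d\xi$; since $\int_\Lambda\theta_1\varphi_1\,d\xi>0$, this forces $\lambda\ge \zeta_0\lambda_1/\kappa$. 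Setting $\mu_1:=\zeta_0\lambda_1/\kappa$ yields nonexistence of a positive solution whenever $\lambda<\mu_1$.

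For existence when $\lambda$ is large I would build an ordered pair of sub- and supersolutions and invoke the sub--supersolution lemma. A supersolution is furnished by $\overline\psi=Ce$, where $e>0$ solves $L_0 e=1$ in $\Lambda$ with $e=0$ on $\partial\Lambda$: since $f(s)\le h(s)\le\epsilon s+C_\epsilon$ by the sublinearity in $(H_2)$, choosing $\epsilon<\zeta_0/(\lambda\|e\|_\infty)$ and then $C$ large makes $\zeta_0 C\ge\lambda f(Ce)$ hold pointwise, so indeed $\mathfrak{M}(\cdots)\,L_0\overline\psi\ge\lambda f(\overline\psi)$. The subsolution is the crux: I would take $\underline\psi=M\varphi_1^{\gamma}$ with the exponent \emph{tuned to the singularity}, $\gamma=2/(1+\nu)>1$, so that $\gamma-2=-\gamma\nu$ and the two singular contributions scale identically near $\partial\Lambda$. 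A direct computation gives $L_0\underline\psi=M\gamma\lambda_1\varphi_1^{\gamma}-M\gamma(\gamma-1)\varphi_1^{\gamma-2}\big({}^{\rm H}\mathfrak{D}^{\alpha,\beta;\psi}_{0+}\varphi_1\big)^2$; in the boundary layer $\{\varphi_1<\delta\}$ the negative second term dominates, and the Hopf bound reduces the subsolution inequality $\zeta_\infty L_0\underline\psi\le\lambda f(\underline\psi)$ to $\lambda M^{-\nu}\le\zeta_\infty\gamma(\gamma-1)c_0^2\,M$, that is, to $M\ge\big(\lambda/(\zeta_\infty\gamma(\gamma-1)c_0^2)\big)^{1/(1+\nu)}$. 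In the interior $\{\varphi_1\ge\delta\}$ the singular term is bounded and, because $h(s)/s\to0$ forces $\lambda f(M\varphi_1^\gamma)$ to outpace $\zeta_\infty M\gamma\lambda_1\varphi_1^\gamma$ once $\lambda$ (hence $M\sim\lambda^{1/(1+\nu)}$) is large, the inequality holds there as well.

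Finally I would fix $M$ and $C$ so that simultaneously $\underline\psi\le\overline\psi$ on $\Lambda$---enlarging $C$ if necessary and comparing via the weak maximum principle for $L_0$---and apply the sub--supersolution lemma to produce a solution $\theta_1$ with $0<\underline\psi\le\theta_1\le\overline\psi$, which is therefore positive. Tracking all the constraints, they are met once $\lambda$ exceeds a threshold $\mu_2$, which can be arranged to satisfy $\mu_2>\mu_1$. The genuine obstacle is the subsolution step: one must control the interplay between the interior term $M\gamma\lambda_1\varphi_1^\gamma$ and the boundary singularity uniformly, which hinges on the precise choice $\gamma=2/(1+\nu)$ and on the Hopf-type estimate for the $\psi$-Hilfer operator supplied by the lemmas. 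Verifying that these nonlocal, fractional computations reproduce the classical second-order balance is where the argument is most delicate.
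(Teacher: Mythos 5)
Your proposal is essentially the paper's own proof: nonexistence is obtained exactly as in the paper by testing a putative positive solution against the principal eigenfunction and using a linear-type upper bound on $h(s)-s^{-\nu}$ (the paper's Lemma \ref{lemma3.1}, which gives the affine bound $as-b$ rather than your $\kappa s$), and existence is obtained from the same ordered sub--supersolution pair --- a large multiple of the eigenfunction raised to the power $2/(1+\nu)$ (the paper takes $\phi=\lambda^{r}\psi^{2/(1+\nu)}$ with $r\in(1/(1+\nu),1)$ in place of your $M\varphi_1^{\gamma}$) together with the scaled function $\Xi=\zeta(\lambda)e$, where $e$ solves ${}^{\rm H}\mathfrak{D}^{\alpha,\beta;\psi}_{T}\bigl({}^{\rm H}\mathfrak{D}^{\alpha,\beta;\psi}_{0+}e\bigr)=1$ and $\zeta(\lambda)$ is chosen via Lemma \ref{lemma3.2} --- followed by the sub--supersolution principle of Propositions \ref{proposition2.2}--\ref{proposition2.3}. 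Two bookkeeping caveats, both at the same level of rigor as the paper itself: in the boundary layer, where $L_0\underline\psi<0$, the usable Kirchhoff bound is $\mathfrak{M}(\cdots)L_0\underline\psi\le\zeta_0 L_0\underline\psi$, so your condition on $M$ should involve $\zeta_0$ rather than $\zeta_\infty$ (harmless, since one just takes $M$ larger); and positivity of $\kappa=\sup_{s>0}f(s)/s$ requires $f$ to take a positive value somewhere, which the stated hypotheses do not force --- a gap the paper shares implicitly, just as its computation of ${}^{\rm H}\mathfrak{D}^{\alpha,\beta;\psi}_{0+}\bigl(\psi^{2/(1+\nu)}\bigr)$ by a fractional power rule is no better justified than your classical chain-rule formula for $L_0(\varphi_1^{\gamma})$.
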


{\bf Remark:}
\begin{itemize}
    \item We can choose $\mathfrak{M}(t)=a+bt$ with $t=\displaystyle\int_{\Lambda} \left\vert^{\rm H}\mathfrak{D}^{\alpha,\beta ;\,\psi}_{0+}\theta_{1}\right\vert^{2}d \xi$, in the problem (\ref{eq1}), and also obtain the existence of positive solutions.

    \item A direct consequence of the result investigated here, in the limit $\alpha\rightarrow 1$, has an entire version of the problem. Furthermore, the freedom to choose $\psi$ allows us to discuss possible particular cases of the problem (\ref{eq1}).
\end{itemize}

In section 2, we present some definitions and essential results for the investigation of the main result. Finally, in section 3 we first investigate two {\bf Lemma \ref{exemplo1}} and {\bf Lemma \ref{eq111}}. In this sense, we conclude the article, attacking the existence of positive solutions via the technique of sub and supersolutions of problem (\ref{eq1}), basically, we are interested in the proof of the {\bf Theorem \ref{principal}}.

\section{ Mathematical background: auxiliary results}

Let $a=(a_{1},a_{2},a_{3})$, $b=(b_{1},b_{2},b_{3})$ and $\alpha=(\alpha_{1},\alpha_{2},\alpha_{3})$ where $0<\alpha_{1},\alpha_{2},\alpha_{3}<1$ with $a_{j}<b_{j}$, for all $j\in \left\{1,2,3 \right\}$. Also put $\Lambda=I_{1}\times I_{2}\times  I_{3}=[a_{1},b_{1}]\times [a_{2},b_{2}]\times [a_{3},b_{3}]$ where $b_{1},b_{2},b_{3}$ and $a_{1},a_{2},a_{3}$ positive constants. Consider also $\psi(\cdot)$ be an increasing and positive monotone function on $(a_{1},b_{1}),(a_{2},b_{2}),(a_{3},b_{3})$, having a continuous derivative $\psi'(\cdot)$ on $(a_{1},b_{1}],(a_{2},b_{2}],(a_{3},b_{3}]$. The $\psi$-Riemann-Liouville fractional partial integrals of $\phi\in \mathscr{L}^{1}(\Lambda)$ of order $\alpha$ $(0<\alpha<1)$ are given by \cite{J2,J1}:
\begin{itemize}
    \item 1-variable: right and left-sided
\begin{equation*}
    {\bf I}^{\alpha,\psi}_{a_{1}} \phi(\xi_{1})=\dfrac{1}{\Gamma(\alpha_{1})} \int_{a_{1}}^{\xi_{1}} \psi'(s_{1})(\psi(\xi_{1})- \psi(s_{1}))^{\alpha_{1}-1} \phi(s_{1}) ds_{1},\,\,to\,\,a_{1}<s_{1}<\xi_{1}
\end{equation*}
and
\begin{equation*}
    {\bf I}^{\alpha,\psi}_{b_{1}} \phi(\xi_{1})=\dfrac{1}{\Gamma(\alpha_{1})} \int_{\xi_{1}}^{b_{1}} \psi'(s_{1})(\psi(s_{1})- \psi(\xi_{1}))^{\alpha_{1}-1} \phi(s_{1}) ds_{1},\,\,to\,\,\xi_{1}<s_{1}<b_{1},
\end{equation*}
with $\xi_{1}\in[a_{1},b_{1}]$, respectively.
    
\item 3-variables: right and left-sided
\begin{eqnarray*}
    {\bf I}^{\alpha,\psi}_{a} \phi(\xi_{1},\xi_{2},\xi_{3})=\dfrac{1}{\Gamma(\alpha_{1})\Gamma(\alpha_{2})\Gamma(\alpha_{3})} \int_{a_{1}}^{\xi_{1}}
    \int_{a_{2}}^{\xi_{2}}
    \int_{a_{3}}^{\xi_{3}}
    \psi'(s_{1})\psi'(s_{2})\psi'(s_{3})
    (\psi(\xi_{1})- \psi(s_{1}))^{\alpha_{1}-1}\notag\\
    \times
    (\psi(\xi_{2})- \psi(s_{2}))^{\alpha_{2}-1}
    (\psi(\xi_{3})- \psi(s_{3}))^{\alpha_{3}-1}
    \phi(s_{1},s_{2},s_{3}) ds_{3}ds_{2}ds_{1},
\end{eqnarray*}
to $a_{1}<s_{1}<\xi_{1}, a_{2}<s_{2}<\xi_{2}, a_{3}<s_{3}<\xi_{3}$ and
\begin{eqnarray*}
    {\bf I}^{\alpha,\psi}_{b} \phi(\xi_{1},\xi_{2},\xi_{3})=\dfrac{1}{\Gamma(\alpha_{1})\Gamma(\alpha_{2})\Gamma(\alpha_{3})} \int_{\xi_{1}}^{b_{1}}
    \int_{\xi_{2}}^{b_{2}}
    \int_{\xi_{3}}^{b_{3}}
    \psi'(s_{1})\psi'(s_{2})\psi'(s_{3})
    (\psi(s_{1})-\psi(\xi_{1}))^{\alpha_{1}-1}\notag\\
    \times
    (\psi(s_{2})-\psi(\xi_{2}))^{\alpha_{2}-1}
    (\psi(s_{3})-\psi(\xi_{3}))^{\alpha_{3}-1}
    \phi(s_{1},s_{2},s_{3}) ds_{3}ds_{2}ds_{1},
\end{eqnarray*}
with $\xi_{1}<s_{1}<b_{1}, \xi_{2}<s_{2}<b_{2}, \xi_{3}<s_{3}<b_{3}$, $\xi_{1}\in[a_{1},b_{1}]$, $\xi_{2}\in[a_{2},b_{2}]$ and $\xi_{3}\in[a_{3},b_{3}]$, respectively.
\end{itemize}

On the other hand, let $\phi,\psi \in C^{n}(\Lambda)$ two functions such that $\psi$ is increasing and $\psi'(\xi_{j})\neq 0$ with $\xi_{j}\in[a_{j},b_{j}]$, $j\in \left\{1,2,3 \right\}$. The left and
right-sided $\psi$-Hilfer fractional partial derivative of $3$-variables of $\phi\in AC^{n}(\Lambda)$ of order $\alpha=(\alpha_{1},\alpha_{2},\alpha_{3})$ $(0<\alpha_{1},\alpha_{2},\alpha_{3}\leq 1)$ and type $\beta=(\beta_{1},\beta_{2},\beta_{3})$ where $0\leq\beta_{1},\beta_{2},\beta_{3}\leq 1$, are defined by \cite{J2,J1}
\begin{equation}\label{derivada1}
{^{\mathbf H}\mathfrak{D}}^{\alpha,\beta;\psi}_{a}\phi(\xi_{1},\xi_{2},\xi_{3})= {\bf I}^{\beta(1-\alpha),\psi}_{a} \Bigg(\frac{1}{\psi'(\xi_{1})\psi'(\xi_{2})\psi'(\xi_{3})} \Bigg(\frac{\partial^{3}} {\partial \xi_{1}\partial \xi_{2}\partial \xi_{3}}\Bigg) \Bigg) {\bf I}^{(1-\beta)(1-\alpha),\psi}_{a} \phi(\xi_{1},\xi_{2},\xi_{3})
\end{equation}
and
\begin{equation}\label{derivada2}
{^{\mathbf H}\mathfrak{D}}^{\alpha,\beta;\psi}_{b}\phi(\xi_{1},\xi_{2},\xi_{3})= {\bf I}^{\beta(1-\alpha),\psi}_{b} \Bigg(-\frac{1}{\psi'(\xi_{1})\psi'(\xi_{2})\psi'(\xi_{3})} \Bigg(\frac{\partial^{3}} {\partial \xi_{1}\partial \xi_{2}\partial \xi_{3}}\Bigg) \Bigg) {\bf I}^{(1-\beta)(1-\alpha),\psi}_{b} \phi(\xi_{1},\xi_{2},\xi_{3})
\end{equation}
where $a$ and $b$ are the same parameters presented in the definition of fractional integrals ${\bf I}_{b}^{\alpha;\psi}(\cdot)$ and ${\bf I}_ {a}^{\alpha;\psi}(\cdot)$.

Taking $a =0+$ in the definition of ${^{\mathbf H}\mathfrak{D}}^{\alpha,\beta;\psi}_{a}(\cdot)$, we have ${^{\mathbf H}\mathfrak{D}}^{\alpha,\beta;\psi}_{0+}(\cdot)$. During the paper we will use the following notation: ${^{\mathbf H}\mathfrak{D}}^{\alpha,\beta;\psi}_{a} \phi(\xi_{1},\xi_{2},\xi_{3}):= {^{\mathbf H}\mathfrak{D}}^{\alpha,\beta;\psi}_{a} \phi$, ${^{\mathbf H}\mathfrak{D}}^{\alpha,\beta;\psi}_{b} \phi(\xi_{1},\xi_{2},\xi_{3}):= {^{\mathbf H}\mathfrak{D}}^{\alpha,\beta;\psi}_{b} \phi$ and ${\bf I}_{a}^{\alpha;\psi}\phi(\xi_{1},\xi_{2},\xi_{3}):= {\bf I}_{a}^{\alpha;\psi}\phi$.

The $\psi$-fractional space is given by
$$
\mathbb{H}^{\alpha,\beta;\,\psi}_{p}(\Lambda)=\left\{\theta_{1}\in \mathscr{L}^{p}(\Lambda)~:~\left\vert^{\rm H}\mathfrak{D}^{\alpha,\beta;\,\psi}_{0+}\theta_{1}\right\vert\in \mathscr{L}^{p}(\Lambda)\right\}
$$ 
with the norm 
$$
||\theta_{1}||=||\theta_{1}||_{\mathbb{H}^{\alpha,\beta;\,\psi}_{p}(\Lambda)}=||\theta_{1}||_{\mathscr{L}^{p}(\Lambda)}+||^{\rm H}\mathfrak{D}^{\alpha,\beta;\,\psi}_{0+}\theta_{1}||_{\mathscr{L}^{p}(\Lambda)}.
$$ 
Denote by $\mathbb{H}^{\alpha,\beta;\,\psi}_{p,0}(\Lambda)$ the closure of $C_{0}^{\infty}(\Lambda)$ in $\mathbb{H}^{\alpha,\beta;\,\psi}_{p}(\Lambda).$
The $\psi$-fractional space $\mathbb{H}^{\alpha,\beta;\,\psi}_{p}(\Lambda)$ are separable and reflexive Banach spaces.

A function $\phi$ is said to be a sub-solution of problem (\ref{eq1}) if it is in $C^{2}(\Lambda)\cap C(\Lambda)$ such that $\phi=0$ on $\partial\Lambda$ and satisfies \cite{Han}
\begin{equation}\label{eq2.1}
    \mathfrak{M}\left(\displaystyle\int_{\Lambda} \left\vert ^{\rm H}\mathfrak{D}_{0+}^{\alpha ,\beta ;\psi }\phi\right\vert ^{2} d\xi\right)\,\,\int_{\Lambda}\,\, ^{\rm H}\mathfrak{D}_{0+}^{\alpha ,\beta ;\psi }\phi \,\,^{\rm H}\mathfrak{D}_{0+}^{\alpha ,\beta ;\psi }w\,\, d\xi \leq \int_{\Lambda} \lambda \left(h(\phi) - \dfrac{1}{\phi^{\nu}} \right) w\,\, d\xi,\,\,\forall w\in \mathcal{W}
\end{equation}
where $\mathcal{W}:=\left\{w\in C^{\infty}_{0}(\Lambda), w\geq 0\,\,in\,\,\Lambda \right\}$, and a function $\psi\in C^{2}(\Lambda)\cap C^{2}(\overline{\Lambda})$ is said to be a super-solution of problem (\ref{eq1}) if $\psi=0$ on $\partial\Lambda$ and satisfies \cite{Han}
\begin{equation}\label{eq2.2}
    \mathfrak{M}\left(\displaystyle\int_{\Lambda} \left\vert ^{\rm H}\mathfrak{D}_{0+}^{\alpha ,\beta ;\psi }\psi\right\vert ^{2} d\xi\right)\,\,\int_{\Lambda}\,\, ^{\rm H}\mathfrak{D}_{0+}^{\alpha ,\beta ;\psi }\psi \,\,^{\rm H}\mathfrak{D}_{0+}^{\alpha ,\beta ;\psi }w\,\, d\xi \geq \int_{\Lambda} \lambda \left(h(\psi) - \dfrac{1}{\psi^{\nu}} \right) w\,\, d\xi,\,\,\forall w\in \mathcal{W}.
\end{equation}

\begin{lemma}{\rm\cite{polo}}\label{lemma2.1} Assume that the $\mathfrak{M}:\mathbb{R}_{0}^{+}\rightarrow\mathbb{R}^{+}$ satisfies ${\rm(H_{1})}$. Furthermore, if $\theta_{1},\theta_{2}\in \mathbb{H}^{\alpha,\beta;\psi}_{p}(\Lambda)$ satisfies
\begin{eqnarray}\label{eq2.2}
    &&\mathfrak{M}\left(\displaystyle\int_{\Lambda} \left\vert ^{\rm H}\mathfrak{D}_{0+}^{\alpha ,\beta ;\psi }\theta_{1}\right\vert ^{p} d\xi\right)\,\,\int_{\Lambda}\,\, \left\vert^{\rm H}\mathfrak{D}_{T}^{\alpha ,\beta ;\psi }\theta_{1} \right\vert^{p-2}   \,\,^{\rm H}\mathfrak{D}_{0+}^{\alpha ,\beta ;\psi }\theta_{1} \,\,\,\,^{\rm H}\mathfrak{D}_{0+}^{\alpha ,\beta ;\psi }\varphi\,d\xi \notag\\&&\leq \mathfrak{M}\left(\displaystyle\int_{\Lambda} \left\vert ^{\rm H}\mathfrak{D}_{0+}^{\alpha ,\beta ;\psi }\theta_{2}\right\vert ^{p} d\xi\right)\,\,\int_{\Lambda}\,\, \left\vert ^{\rm H}\mathfrak{D}_{0+}^{\alpha ,\beta ;\psi }\theta_{2}\right\vert ^{p-2} \,\,^{\rm H}\mathfrak{D}_{0+}^{\alpha ,\beta ;\psi } \theta_{2}\,\,\,\,^{\rm H}\mathfrak{D}_{0+}^{\alpha ,\beta ;\psi }\varphi d\xi
\end{eqnarray}
for all $\varphi\in \mathbb{H}^{\alpha,\beta;\psi}_{p}(\Lambda)$, $\varphi\geq 0$, then $\theta_{1}\leq \theta_{2}$ in $\Lambda$.
\end{lemma}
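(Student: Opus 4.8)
The plan is a weak comparison argument. Abbreviate $\mathcal{D} := {}^{\rm H}\mathfrak{D}_{0+}^{\alpha,\beta;\psi}$ and set $A_i := \int_\Lambda |\mathcal{D}\theta_i|^p\,d\xi$, $m_i := \mathfrak{M}(A_i)$ for $i=1,2$. First I would insert the specific admissible function $\varphi := (\theta_1-\theta_2)^+ = \max\{\theta_1-\theta_2,0\}$ into the hypothesised variational inequality; it is nonnegative and belongs to $\mathbb{H}^{\alpha,\beta;\psi}_p(\Lambda)$ (which is a lattice), hence is an admissible test function. Rearranging the assumed inequality so that the $\theta_2$-side is brought to the left gives
\[
\int_{\Lambda}\Bigl(m_1\,|\mathcal{D}\theta_1|^{p-2}\mathcal{D}\theta_1 - m_2\,|\mathcal{D}\theta_2|^{p-2}\mathcal{D}\theta_2\Bigr)\,\mathcal{D}\varphi\;d\xi \le 0 .
\]
Writing $\Omega^+ := \{\xi\in\Lambda:\theta_1(\xi)>\theta_2(\xi)\}$ and using that $\varphi\equiv 0$ off $\Omega^+$, so that there $\mathcal{D}\varphi$ reduces to $\mathcal{D}\theta_1-\mathcal{D}\theta_2$, this becomes
\[
\int_{\Omega^+}\Bigl(m_1\,|\mathcal{D}\theta_1|^{p-2}\mathcal{D}\theta_1 - m_2\,|\mathcal{D}\theta_2|^{p-2}\mathcal{D}\theta_2\Bigr)\bigl(\mathcal{D}\theta_1-\mathcal{D}\theta_2\bigr)\;d\xi \le 0 .
\]

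The core of the argument is to show that the left-hand side is in fact nonnegative, whence it must vanish and force $\Omega^+$ to be negligible. The two structural ingredients are: (i) the elementary strict monotonicity of the field $\eta\mapsto|\eta|^{p-2}\eta$, i.e. $(|a|^{p-2}a-|b|^{p-2}b)(a-b)\ge 0$ with equality only when $a=b$ (quantitatively bounded below by $c_p|a-b|^p$ for $p\ge 2$, with the analogous estimate for $1<p<2$); and (ii) hypothesis $(H_1)$, giving $0<\zeta_0\le m_i\le\zeta_\infty$, together with the monotonicity of $\mathfrak{M}$. Extracting the constant $m_2$, I would split the integrand as $m_2\,(|\mathcal{D}\theta_1|^{p-2}\mathcal{D}\theta_1-|\mathcal{D}\theta_2|^{p-2}\mathcal{D}\theta_2)(\mathcal{D}\theta_1-\mathcal{D}\theta_2) + (m_1-m_2)\,|\mathcal{D}\theta_1|^{p-2}\mathcal{D}\theta_1\,(\mathcal{D}\theta_1-\mathcal{D}\theta_2)$, the first term being nonnegative by (i).

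To control the indefinite cross term I would distinguish the two cases dictated by the monotonicity of $\mathfrak{M}$: if $A_1\le A_2$ then $m_1\le m_2$, and I would instead extract $m_1$ and pair the remainder with $|\mathcal{D}\theta_2|^{p-2}\mathcal{D}\theta_2$, whereas if $A_1> A_2$ then $m_1>m_2\ge\zeta_0$; in each case the aim is to align the sign of $m_1-m_2$ with the chosen split so that, after recombining with the coercive principal term and invoking the lower bound $\zeta_0$, the whole integral over $\Omega^+$ is bounded below by $\zeta_0\int_{\Omega^+}(|\mathcal{D}\theta_1|^{p-2}\mathcal{D}\theta_1-|\mathcal{D}\theta_2|^{p-2}\mathcal{D}\theta_2)(\mathcal{D}\theta_1-\mathcal{D}\theta_2)\,d\xi\ge 0$. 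Combined with the reversed inequality from the first step, this yields $\mathcal{D}\theta_1=\mathcal{D}\theta_2$ a.e.\ on $\Omega^+$, hence $\mathcal{D}\varphi=0$ a.e.; a Poincar\'e-type inequality on $\mathbb{H}^{\alpha,\beta;\psi}_{p,0}(\Lambda)$ (using $\varphi=0$ on $\partial\Lambda$) then forces $\|\varphi\|=0$, i.e. $(\theta_1-\theta_2)^+\equiv 0$ and $\theta_1\le\theta_2$ in $\Lambda$.

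I expect the genuinely delicate points to be twofold. The first, and main, obstacle is the mismatch of the nonlocal Kirchhoff coefficients $m_1\ne m_2$: unlike the local case the integrand is \emph{not} pointwise signed, so the case analysis above must be arranged so that the indefinite cross term is always absorbed into the coercive $p$-monotone term — this is exactly where both the two-sided bound and the monotonicity in $(H_1)$ are indispensable, and it is plausibly where an extra structural property of $\mathfrak{M}$ from \cite{polo} is really needed. The second subtlety, often glossed over, is that $\mathcal{D}$ is a \emph{nonlocal} operator, so the localization "$\mathcal{D}\varphi=(\mathcal{D}\theta_1-\mathcal{D}\theta_2)\chi_{\Omega^+}$" is not the elementary chain rule for the positive part; justifying the reduction to $\Omega^+$ rigorously (or replacing it by a suitable order-preserving property of the fractional operator on the cone of nonnegative functions) is the technical heart that the cited framework is meant to supply.
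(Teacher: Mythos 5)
First, a point of reference: the paper itself contains \emph{no} proof of this lemma --- it is imported verbatim, with no argument, from the unpublished preprint \cite{polo} --- so your proposal can only be judged on its own terms. On those terms it is incomplete at exactly the step you yourself single out as the main obstacle, and that step cannot be closed from the stated hypotheses. In your two-case analysis, the sign of $m_1-m_2$ is governed by the \emph{global} quantities $A_i=\int_\Lambda|\mathcal{D}\theta_i|^p\,d\xi$, whereas the cross term you must absorb, e.g.\ $(m_1-m_2)\int_{\Omega^+}|\mathcal{D}\theta_1|^{p-2}\mathcal{D}\theta_1\,(\mathcal{D}\theta_1-\mathcal{D}\theta_2)\,d\xi$, is an integral over $\Omega^+$ only. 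Knowing $A_1>A_2$ (hence $m_1\ge m_2$) gives no information on the sign of that localized factor, since the excess of $\theta_1$'s norm may sit entirely in $\Lambda\setminus\Omega^+$; the symmetric problem occurs in the case $A_1\le A_2$. So neither case yields the coercive lower bound $\zeta_{0}\int_{\Omega^+}\bigl(|\mathcal{D}\theta_1|^{p-2}\mathcal{D}\theta_1-|\mathcal{D}\theta_2|^{p-2}\mathcal{D}\theta_2\bigr)\bigl(\mathcal{D}\theta_1-\mathcal{D}\theta_2\bigr)\,d\xi$, and the proof never closes.

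Moreover this is not a fixable presentation issue: with only $(H_1)$ and monotonicity of $\mathfrak{M}$, the asserted comparison principle is \emph{false} in the local limit that the paper itself invokes ($\alpha\to1$, $\psi(\xi)=\xi$, $p=2$), so no rearrangement of your splitting can succeed. Concretely, on $(0,1)$ take $\theta_2(x)=\sin(\pi x)$, let $w$ be the convex piecewise-linear function with $w(0)=w(1)=0$ and $w\equiv-\varepsilon$ on $[2\varepsilon,1-2\varepsilon]$, and set $\theta_1=w+(1+1.1\varepsilon)\theta_2$. For every admissible $\varphi\ge0$ vanishing at the endpoints one has $\int_0^1\theta_1'\varphi'\,dx-(1+1.1\varepsilon)\int_0^1\theta_2'\varphi'\,dx=\int_0^1 w'\varphi'\,dx=-\tfrac12\bigl(\varphi(2\varepsilon)+\varphi(1-2\varepsilon)\bigr)\le0$, while a direct computation gives $\|\theta_1'\|_2^2=\|\theta_2'\|_2^2+\varepsilon\,(1-4\pi+1.1\pi^2)+O(\varepsilon^2)<\|\theta_2'\|_2^2$ for small $\varepsilon$. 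Hence one may choose $\mathfrak{M}$ continuous, increasing, pinched between positive constants, with $\mathfrak{M}(\|\theta_1'\|_2^2)=1$ and $\mathfrak{M}(\|\theta_2'\|_2^2)=1+1.1\varepsilon$; then the hypothesis of the lemma holds with $m_1=1$, $m_2=1+1.1\varepsilon$, yet $\theta_1(1/2)=1+0.1\varepsilon>\theta_2(1/2)$. (This is the known failure of comparison principles for Kirchhoff operators; cf.\ Garc\'{\i}a-Meli\'an--Iturriaga, Proc.\ Amer.\ Math.\ Soc.\ 144 (2016), 3405--3411.) Your second flagged worry is also genuine --- for the nonlocal operator $\mathcal{D}$ there is no chain rule giving $\mathcal{D}(\theta_1-\theta_2)^+=(\mathcal{D}\theta_1-\mathcal{D}\theta_2)\chi_{\Omega^+}$, and even membership of $(\theta_1-\theta_2)^+$ in the space needs justification --- but it is secondary: the decisive defect is that the Kirchhoff cross term cannot be signed, and under the hypotheses as stated nothing can make it signable.
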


\begin{proposition}{\rm\cite{polo,Afrouzi2}}\label{proposition2.2} Let $\mathfrak{M}:\mathbb{R}_{0}^{+}\rightarrow\mathbb{R}^{+}$ satisfying ${\rm(H_{1})}$. Assume that $h$ satisfies the sub-critical growth condition.
\begin{equation*}
    |h(x,t)|\leq c(1+|t|^{q-1}), \forall x\in \Lambda,\,\forall t\in \mathbb{R}
\end{equation*}
where $1<q<p^{*}$, and the function $h(x,t)$ is non-decreasing in $t\in \mathbb{R}$. If there exist a sub-solution $\underline{\theta_{1}}\in \mathbb{H}^{\alpha,\beta;\psi}_{p}(\Lambda)$ and a supersolution $\overline{\theta_{1}}\in\mathbb{H}^{\alpha,\beta;\psi}_{p}(\Lambda)$ of problem {\rm(\ref{eq1})}, then {\rm(\ref{eq1})} has a minimal solution ${\theta_{1}}_{*}$ and a maximal solution $\theta_{1}^{*}$ in the order interval $[{\theta_{1}}_{*},\theta_{1}^{*}]$, that is, $\underline{\theta_{1}}\leq {\theta_{1}}_{*}\leq \theta_{1}^{*} \leq \overline{\theta_{1}}$ and if $\theta_{1}$ is any solution of {\rm(\ref{eq1})} such that $\underline{\theta_{1}}\leq \theta_{1} \leq \overline{\theta_{1}}$, then ${\theta_{1}}_{*}\leq \theta_{1} \leq \theta_{1}^{*}$.
\end{proposition}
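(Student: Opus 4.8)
The plan is to combine a truncation argument with the direct method of the calculus of variations to produce a solution lying in the order interval $[\underline{\theta_1},\overline{\theta_1}]$, and then to run a monotone iteration, controlled by the comparison principle of Lemma~\ref{lemma2.1}, to extract the minimal and the maximal solutions. Throughout, the reflexivity and separability of $\mathbb{H}^{\alpha,\beta;\psi}_{p,0}(\Lambda)$ supply the weak compactness needed, while condition $(H_1)$, which keeps $\mathfrak{M}$ between $\zeta_0>0$ and $\zeta_\infty$, guarantees uniform coercivity of the associated energy.

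First I would truncate the nonlinearity $h$. Set $\widetilde h(x,t)=h\big(x,\overline{\theta_1}(x)\big)$ for $t>\overline{\theta_1}(x)$, $\widetilde h(x,t)=h(x,t)$ for $\underline{\theta_1}(x)\le t\le\overline{\theta_1}(x)$, and $\widetilde h(x,t)=h\big(x,\underline{\theta_1}(x)\big)$ for $t<\underline{\theta_1}(x)$. Since $\underline{\theta_1},\overline{\theta_1}\in\mathbb{H}^{\alpha,\beta;\psi}_{p}(\Lambda)$ are fixed, $\widetilde h$ is a bounded Carath\'eodory function, hence its primitive $\widetilde H(x,t)=\int_0^t\widetilde h(x,s)\,ds$ grows at most linearly in $t$. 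With $\widehat{\mathfrak{M}}(s)=\int_0^s\mathfrak{M}(\tau)\,d\tau$, define on $X:=\mathbb{H}^{\alpha,\beta;\psi}_{p,0}(\Lambda)$ the functional
\[
J(\theta)=\frac{1}{p}\,\widehat{\mathfrak{M}}\!\left(\int_{\Lambda}\Big|{}^{\rm H}\mathfrak{D}^{\alpha,\beta;\psi}_{0+}\theta\Big|^{p}\,d\xi\right)-\lambda\int_{\Lambda}\widetilde H(x,\theta)\,d\xi .
\]
Condition $(H_1)$ gives $\widehat{\mathfrak{M}}(s)\ge\zeta_0\,s$ and $\widetilde H$ is at most linear, so $J$ is coercive on $X$. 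Moreover $s\mapsto\widehat{\mathfrak{M}}(s)$ is continuous and non-decreasing while $\theta\mapsto\int_{\Lambda}|{}^{\rm H}\mathfrak{D}^{\alpha,\beta;\psi}_{0+}\theta|^{p}\,d\xi$ is weakly lower semicontinuous, so the Kirchhoff term is weakly lower semicontinuous; the sub-critical range $1<q<p^{*}$ makes the embedding $X\hookrightarrow\mathscr{L}^{q}(\Lambda)$ compact, so $\theta\mapsto\int_{\Lambda}\widetilde H(x,\theta)\,d\xi$ is weakly continuous. The direct method then produces a global minimizer $\theta_{*}\in X$, which is a weak solution of the truncated problem.

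Next I would confine the minimizer to the order interval, so that the truncation is inactive and $\theta_{*}$ solves \eqref{eq1}. Testing the Euler--Lagrange identity for $\theta_{*}$ with $\varphi=(\theta_{*}-\overline{\theta_1})^{+}\ge0$ and subtracting the defining super-solution inequality, the monotonicity of $h$ together with the comparison structure of Lemma~\ref{lemma2.1} forces $(\theta_{*}-\overline{\theta_1})^{+}=0$, i.e.\ $\theta_{*}\le\overline{\theta_1}$; the symmetric choice $\varphi=(\underline{\theta_1}-\theta_{*})^{+}$ against the sub-solution inequality \eqref{eq2.1} gives $\theta_{*}\ge\underline{\theta_1}$. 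To reach the extremal solutions I would iterate monotonically: put $\theta^{(0)}=\underline{\theta_1}$ and let $\theta^{(n+1)}$ be the minimizer of $J$ over the order interval $[\theta^{(n)},\overline{\theta_1}]$; Lemma~\ref{lemma2.1} and the monotonicity of $h$ show that $\{\theta^{(n)}\}$ is non-decreasing, bounded in $X$, and dominated by every solution lying in $[\underline{\theta_1},\overline{\theta_1}]$, so its limit is the minimal solution ${\theta_1}_{*}$; the decreasing iteration started from $\overline{\theta_1}$ yields the maximal solution $\theta_1^{*}$, and Lemma~\ref{lemma2.1} squeezes any intermediate solution between the two.

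The main obstacle is the nonlocal Kirchhoff coefficient $\mathfrak{M}\!\big(\int_{\Lambda}|{}^{\rm H}\mathfrak{D}^{\alpha,\beta;\psi}_{0+}\theta|^{p}\,d\xi\big)$: it destroys the linear comparison principle on which the classical monotone iteration rests and makes the solution operator nonlocal, so the monotonicity of the iterates is not automatic. The resolution is exactly Lemma~\ref{lemma2.1}, which supplies a comparison principle for the full nonlocal operator, together with $(H_1)$, which keeps $\mathfrak{M}$ bounded away from $0$ and $\infty$ and thereby preserves the uniform coercivity and the weak lower semicontinuity of $\widehat{\mathfrak{M}}(\int_{\Lambda}|{}^{\rm H}\mathfrak{D}^{\alpha,\beta;\psi}_{0+}\theta|^{p}\,d\xi)$ at every step.
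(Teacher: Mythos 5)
The first thing to say is that the paper contains no proof of Proposition~\ref{proposition2.2} at all: it is imported, with the citations \cite{polo,Afrouzi2}, as a black-box tool, and the paper's own proofs begin only in Section~3. So your attempt can only be judged on its own correctness, not against an argument of the paper. Your overall architecture --- truncate the nonlinearity between $\underline{\theta_1}$ and $\overline{\theta_1}$, minimize a truncated energy, confine the minimizer to the order interval by comparison, then iterate monotonically to reach extremal solutions --- is the standard scheme in this literature (it is essentially what \cite{Han} and \cite{Afrouzi2} do), so the plan is reasonable; the problems are in the execution.

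There are three genuine gaps. First, your truncation drops the singular term: problem \eqref{eq1} has right-hand side $\lambda\bigl(h(\theta_1)-\theta_1^{-\nu}\bigr)$, but your functional $J$ contains only $\widetilde H$, the primitive of the truncated $h$, so its Euler--Lagrange equation is not (a truncation of) \eqref{eq1}. You must truncate the full nonlinearity $f(x,t)=h(t)-t^{-\nu}$ with $\underline{\theta_1}$ as lower cutoff, and then your claim that the truncation is a \emph{bounded} Carath\'eodory function fails: near $\partial\Lambda$ the subsolution vanishes, so the truncated singular term is of size $\underline{\theta_1}^{-\nu}$, and one needs an integrability argument (this is where $0<\nu<1$ and the boundary behavior of $\underline{\theta_1}$ enter), not boundedness. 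Second, the confinement step as you describe it --- test with $(\theta_*-\overline{\theta_1})^{+}$ and ``subtract the supersolution inequality'' --- is the local-operator argument, and it breaks exactly here: the identity for $\theta_*$ carries the coefficient $\mathfrak{M}\bigl(\int_\Lambda|{}^{\rm H}\mathfrak{D}^{\alpha,\beta;\psi}_{0+}\theta_*|^{p}\,d\xi\bigr)$ while the supersolution inequality carries $\mathfrak{M}\bigl(\int_\Lambda|{}^{\rm H}\mathfrak{D}^{\alpha,\beta;\psi}_{0+}\overline{\theta_1}|^{p}\,d\xi\bigr)$, two different numbers, so the difference of the left-hand sides is not the pairing of a monotone operator with $(\theta_*-\overline{\theta_1})^{+}$. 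The correct use of Lemma~\ref{lemma2.1} is as a black box: verify its hypothesis for the pair $(\theta_*,\overline{\theta_1})$, which follows from the pointwise bound $\widetilde f(x,\theta_*)\le f(x,\overline{\theta_1})$ (definition of the truncation plus monotonicity of $f$ in $t$) together with the supersolution inequality, and conclude $\theta_*\le\overline{\theta_1}$ directly; your hybrid description does not go through. Third, the extremal-solution step --- the actual content of the proposition beyond existence of one solution --- is not established: a minimizer of $J$ over the convex set $[\theta^{(n)},\overline{\theta_1}]$ satisfies only a variational inequality, not the equation; the monotonicity of the iterates and the assertion that the limit is dominated by every solution in the interval require an induction with Lemma~\ref{lemma2.1} (cleanest with the frozen-right-hand-side iteration, where $\theta^{(n+1)}$ solves the full nonlocal problem with datum $f(x,\theta^{(n)})$); and passing to the limit in the nonlocal coefficient $\mathfrak{M}\bigl(\int_\Lambda|{}^{\rm H}\mathfrak{D}^{\alpha,\beta;\psi}_{0+}\theta^{(n)}|^{p}\,d\xi\bigr)$ needs strong (norm) convergence of the fractional derivatives, extracted from the equations via an $(S_+)$-type argument, which you do not address.
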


\begin{proposition} \label{proposition2.3} Let $\mathfrak{M}:\mathbb{R}_{0}^{+}\rightarrow\mathbb{R}^{+}$ satisfying ${\rm(H_{1})}$. Assume that $\underline{\theta_{1}}$ is a sub-solution and $\overline{\theta_{1}}$ is a supersolution or problem {\rm(\ref{eq1})} in the space $\mathbb{H}^{\alpha,\beta;\psi}_{p}(\Lambda)\cap L^{\infty}(\Lambda)$, and $\underline{\theta_{1}}\leq \overline{\theta_{1}}$ in $\Lambda$. If $h\in C(\overline{\Lambda}\times\mathbb{R},\mathbb{R})$ is nondecreasing in $t\in \left[ \inf_{\Lambda} \underline{\theta_{1}},\, \sup_{\Lambda} \overline{\theta_{1}}\right]$, then the conclusion of {\bf Proposition \ref{proposition2.2}} is valid.
\end{proposition}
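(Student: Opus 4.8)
The plan is to reduce Proposition \ref{proposition2.3} to Proposition \ref{proposition2.2} by a truncation argument, exploiting the fact that now both $\underline{\theta_1}$ and $\overline{\theta_1}$ belong to $L^{\infty}(\Lambda)$. Since $\underline{\theta_1}\leq\overline{\theta_1}$ and both are essentially bounded, the constants $m:=\inf_{\Lambda}\underline{\theta_1}$ and $M:=\sup_{\Lambda}\overline{\theta_1}$ are finite, and every function lying in the order interval $[\underline{\theta_1},\overline{\theta_1}]$ satisfies $m\leq\theta_1(\xi)\leq M$ for a.e.\ $\xi\in\Lambda$. The only hypotheses of Proposition \ref{proposition2.2} that are not directly assumed here are the \emph{global} subcritical growth bound on $h$ and the monotonicity of $h(x,\cdot)$ on all of $\mathbb{R}$ (rather than merely on $[m,M]$); both will be recovered by modifying $h$ outside $[m,M]$, while condition $(H_1)$ on $\mathfrak{M}$ is left untouched.

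Concretely, I would first introduce the truncated nonlinearity
\begin{equation*}
\widehat{h}(x,t)=
\begin{cases}
h(x,m), & t<m,\\
h(x,t), & m\leq t\leq M,\\
h(x,M), & t>M.
\end{cases}
\end{equation*}
By construction $\widehat{h}\in C(\overline{\Lambda}\times\mathbb{R},\mathbb{R})$; it is nondecreasing in $t$ on all of $\mathbb{R}$, since it agrees with the nondecreasing map $h(x,\cdot)$ on $[m,M]$ and is constant outside; and it is globally bounded, so it trivially satisfies $|\widehat{h}(x,t)|\leq c(1+|t|^{q-1})$ for any $1<q<p^{*}$. Hence $\widehat{h}$ meets all the conditions imposed on the reaction term in Proposition \ref{proposition2.2}, and $\mathfrak{M}$ together with any term not involving $h$ (such as the singular contribution) is unaffected by the modification.

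Next I would verify that the pair $(\underline{\theta_1},\overline{\theta_1})$ remains a sub/supersolution pair for the problem in which $h$ is replaced by $\widehat{h}$. This is immediate from the range condition: since $m\leq\underline{\theta_1}(\xi)\leq M$ and $m\leq\overline{\theta_1}(\xi)\leq M$ a.e., we have $\widehat{h}(\xi,\underline{\theta_1})=h(\xi,\underline{\theta_1})$ and $\widehat{h}(\xi,\overline{\theta_1})=h(\xi,\overline{\theta_1})$ pointwise, so the weak inequality (\ref{eq2.1}) and its supersolution counterpart are literally unchanged. Applying Proposition \ref{proposition2.2} to the truncated problem then yields a minimal solution ${\theta_1}_{*}$ and a maximal solution $\theta_1^{*}$ in $[\underline{\theta_1},\overline{\theta_1}]$.

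Finally, I would transfer the conclusion back to the original problem. Any solution $\theta_1$ produced by Proposition \ref{proposition2.2} satisfies $\underline{\theta_1}\leq\theta_1\leq\overline{\theta_1}$, hence $m\leq\theta_1(\xi)\leq M$ a.e., and on this range $\widehat{h}(\xi,\theta_1)=h(\xi,\theta_1)$, so $\theta_1$ in fact solves (\ref{eq1}) with the original $h$. The same range argument shows conversely that any solution of (\ref{eq1}) lying in $[\underline{\theta_1},\overline{\theta_1}]$ solves the truncated problem, whence the extremality and ordering statements ${\theta_1}_{*}\leq\theta_1\leq\theta_1^{*}$ carry over verbatim, giving the full conclusion of Proposition \ref{proposition2.2}. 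I expect the only delicate point to be confirming that the truncation preserves the exact weak formulation and the monotone-operator/iteration machinery underpinning Proposition \ref{proposition2.2}; once $\widehat{h}$ is globally monotone and bounded this machinery applies unchanged, so the argument is essentially a matter of bookkeeping on the range $[m,M]$.
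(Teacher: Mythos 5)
Your proposal cannot be checked against the paper's own argument for a simple reason: the paper gives \emph{no} proof of Proposition \ref{proposition2.3} at all. It is stated as a background result, without even a citation (in contrast to Proposition \ref{proposition2.2}, which is quoted from \cite{polo,Afrouzi2}). That said, your truncation argument is correct and is precisely the standard way this kind of statement is deduced from its globally-monotone counterpart in the sub-supersolution literature (it is, for instance, how the analogous reduction is carried out in the $p(x)$-Kirchhoff setting of \cite{Han}): the constants $m=\inf_{\Lambda}\underline{\theta_{1}}$ and $M=\sup_{\Lambda}\overline{\theta_{1}}$ are finite by the $L^{\infty}$ hypothesis, the truncation $\widehat{h}$ is continuous, globally nondecreasing and bounded (hence trivially subcritical), the pair $(\underline{\theta_{1}},\overline{\theta_{1}})$ is untouched because both functions take values in $[m,M]$, and solutions of the truncated problem lying in the order interval coincide with solutions of the original problem there, so minimality and maximality transfer. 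One caveat you should make explicit: Proposition \ref{proposition2.2} as stated in the paper imposes growth and monotonicity on $h$ alone, whereas the weak formulation of problem (\ref{eq1}) involves the full reaction $s\mapsto\lambda\left(h(s)-s^{-\nu}\right)$, whose singular part is neither continuous at $0$ nor subcritical near $0$. For your bookkeeping to close cleanly you should either truncate the full right-hand side (observing that $s\mapsto -s^{-\nu}$ is itself increasing on $(0,\infty)$, so global monotonicity of the truncation survives), or note that the singular term enters both propositions identically and is therefore neutral in the reduction. This imprecision is inherited from the paper's own formulation of Proposition \ref{proposition2.2} rather than introduced by your argument, so it is a point to flag, not a gap that invalidates your proof.
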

\section{Existence of positive solution}

First, before proving two essential results in the discussion of the main result of this paper.

\begin{lemma}\label{lemma3.1} Let $\mathbb{R}^{+}\rightarrow\mathbb{R}$ be a continuous function and $\lim_{s\rightarrow\infty} \dfrac{h(s)}{s}=0$, then $\exists a,b>0$ such that
\begin{equation*}
    h(s)-\frac{1}{s^{\nu}}\leq as-b, \,\, \forall s\geq 0,
\end{equation*}
\end{lemma}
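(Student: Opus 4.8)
The plan is to read the desired inequality as the assertion that, for a suitably large slope $a>0$, the continuous function
\[
g_{a}(s):=h(s)-\frac{1}{s^{\nu}}-as,\qquad s>0,
\]
stays below a strictly negative constant on all of $(0,\infty)$; the endpoint $s=0$ is vacuous, since $1/s^{\nu}\to+\infty$ there and the left-hand side tends to $-\infty$. First I would record the two boundary behaviours of $g_{a}$. As $s\to 0^{+}$ the term $-1/s^{\nu}$ forces $g_{a}(s)\to-\infty$, while as $s\to\infty$ one has $g_{a}(s)/s=h(s)/s-s^{-\nu-1}-a\to -a<0$ by the hypothesis $h(s)/s\to 0$, so again $g_{a}(s)\to-\infty$. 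Being continuous on $(0,\infty)$ and tending to $-\infty$ at both ends, $g_{a}$ attains a finite global maximum $G(a):=\max_{s>0}g_{a}(s)$.

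The heart of the argument is to show that $G(a)<0$ once $a$ is large enough. I would first upgrade the sublinearity into a global affine bound: since $h(s)/s\to 0$ there is $S_{1}$ with $h(s)\le s$ for $s\ge S_{1}$, and on the compact set $[0,S_{1}]$ continuity gives $h\le C$; hence $h(s)\le s+C$ for every $s\ge 0$. Consequently
\[
g_{a}(s)\le (1-a)s-\frac{1}{s^{\nu}}+C\qquad(s>0),
\]
and an elementary single-variable maximisation of $s\mapsto(1-a)s-s^{-\nu}$ (critical point $s_{\ast}=(\nu/(a-1))^{1/(\nu+1)}$ for $a>1$) yields
\[
G(a)\le C-(\nu+1)\Big(\tfrac{a-1}{\nu}\Big)^{\nu/(\nu+1)}.
\]
Since $\nu/(\nu+1)\in(0,1)$ the right-hand side tends to $-\infty$ as $a\to\infty$, so I may fix an $a>1$ for which $G(a)<0$ and set $b:=-G(a)>0$. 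Then $g_{a}(s)\le G(a)=-b$ for all $s>0$, which is exactly $h(s)-s^{-\nu}\le as-b$, with both $a,b$ positive as required.

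A computation-free variant I would keep in reserve argues by contradiction: if $G(a_{n})\ge 0$ along some $a_{n}\to\infty$, choose maximisers $s_{n}$ with $h(s_{n})\ge a_{n}s_{n}+s_{n}^{-\nu}$, and rule out each of the cases $s_{n}\to 0$ (forces $h(s_{n})\ge s_{n}^{-\nu}\to\infty$ against continuity at $0$), $s_{n}$ bounded away from $0$ and $\infty$ (forces $h(s_{n})\ge a_{n}s_{n}\to\infty$ against boundedness on compacts), and $s_{n}\to\infty$ (forces $h(s_{n})/s_{n}\ge a_{n}\to\infty$ against $h(s)/s\to 0$). I expect the only genuine subtlety to be this point: a fixed slope $a$ cannot work for an arbitrary admissible $h$ (take $h$ a large positive constant), so the slope must be enlarged until it dominates the maximal overshoot, and the real interplay is between the near-zero singularity $-1/s^{\nu}$ and the sublinear growth of $h$ at infinity, which is precisely what confines the maximiser to the interior and keeps $G(a)$ finite and controllable.
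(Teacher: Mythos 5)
Your proof is correct, but it takes a genuinely different route from the paper's. The paper fixes an auxiliary constant $c_{1}>0$, applies the definition of the limit to $g(s)=\frac{h(s)}{s}-\frac{1}{s^{\nu+1}}-\frac{c_{1}}{s}$ to get a linear bound $h(s)-s^{-\nu}\leq\varepsilon s-c_{1}$ for $s>A$, bounds $h(s)-s^{-\nu}\leq c_{2}-A^{-\nu}$ on $(0,A]$ by continuity, and then merges the two regions, taking $c_{1}$ large so that $b:=c_{1}-c_{2}+A^{-\nu}>0$; the slope there is a small $\varepsilon$ and positivity of the intercept is supposed to be bought with $c_{1}$. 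You instead make the \emph{slope} large: from the global affine bound $h(s)\leq s+C$ you reduce to the explicit one-variable maximization of $(1-a)s-s^{-\nu}$, obtaining $G(a)\leq C-(\nu+1)\bigl(\tfrac{a-1}{\nu}\bigr)^{\nu/(\nu+1)}\to-\infty$ as $a\to\infty$, and set $b:=-G(a)$. Your closing remark---that a fixed small slope cannot work when, e.g., $h$ is a large positive constant---is precisely where the paper's merging step is fragile: on $(0,A]$ the paper's final inequality would require $c_{1}\leq\varepsilon s$, which fails near $s=0$, and enlarging $c_{1}$ only worsens this (besides changing $A$ and $c_{2}$, which themselves depend on $c_{1}$); there is also a sign slip, since with $-c_{1}/s$ in the definition of $g$ one obtains $+c_{1}$, not $-c_{1}$, on the right-hand side. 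So your large-slope optimization costs a short calculus computation but yields a watertight argument, whereas the paper's two-region $\varepsilon$-argument is shorter but, as written, does not close on the region near zero.
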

\begin{proof} For $c_{1}>0$ (fix), we consider
\begin{equation*}
    g(s)=\frac{h(s)}{s}-\frac{1}{s^{\nu+1}}-\frac{c_{1}}{s},\,\,\forall s>0.
\end{equation*}

{\color{red} As $\lim_{s\rightarrow\infty} g(s)=0$, so by definition we have $\left|\frac{h(s)}{s}-\frac{1}{s^{\nu+1 }}-\frac{c_{1}}{s}\right|\leq \varepsilon$.
Thus, $h(s)-\dfrac{1}{s^{\nu}}\leq \varepsilon s- c_{1}\,\, \forall s>A$. Let $s\in[0,A]$, as $h$ is continuous on $[0,A]$, then $h$ is bounded}
\begin{equation}\label{3.4}
    h(s)\leq c_{2}
\end{equation}
so
\begin{equation}\label{3.5}
    -\frac{1}{s^{\nu}}\leq - \frac{1}{A^{\nu}}.
\end{equation}
Using the inequalities (\ref{3.4}) and (\ref{3.5}), one has
\begin{equation*}
    h(s)-\frac{1}{s^{\nu}} \leq c_{2}- \frac{1}{A^{\nu}}.
\end{equation*}

Therefore, for all $s\in \mathbb{R}$, it follows that
\begin{equation*}
    h(s)-\frac{1}{s^{\nu}} \leq \varepsilon s- \left(c_{1}-c_{2}+\frac{1}{A^{\nu}}\right).
\end{equation*}

Taking $c_{1}$ large enough such that
\begin{equation*}
b= c_{1}-c_{2}+\frac{1}{A^{\nu}}>0.
\end{equation*}
Thus, we conclude that
\begin{equation*}
    h(s)-\frac{1}{s^{\nu}}< as-b.
\end{equation*}

\end{proof}

\begin{lemma}\label{lemma3.2} Let $f:\mathbb{R}^{+}\rightarrow\mathbb{R}$ be a continuous function such that $\lim_{s\rightarrow\infty} \dfrac{h(s)}{s}=0$, {\color{red} then for all $\lambda>0$, there exists $\zeta(\lambda)>0$ such that}
\begin{equation*}
\zeta_{0}\zeta(\lambda) \geq \lambda ( h(\zeta(\lambda) ||e||_{\infty})).
\end{equation*}
\end{lemma}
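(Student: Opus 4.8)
The plan is to treat this as a direct consequence of the sublinear growth hypothesis on $h$, exploiting that $\zeta_{0}>0$ by $(H_{1})$ and that $\|e\|_{\infty}$ is a fixed positive constant (the sup-norm of the auxiliary function $e$). Writing $M:=\|e\|_{\infty}>0$ and $\zeta:=\zeta(\lambda)$, the desired inequality $\zeta_{0}\zeta\geq\lambda h(\zeta M)$ is, after dividing by the positive quantity $\zeta M$, equivalent to $\frac{\zeta_{0}}{\lambda M}\geq\frac{h(\zeta M)}{\zeta M}$. Since $\lim_{s\to\infty}h(s)/s=0$, the right-hand side tends to $0$ as $\zeta\to\infty$, while the left-hand side is a fixed positive number; hence the inequality must hold once $\zeta$ is taken large enough. (I note the statement writes $f$ but the conclusion involves $h$, so I carry out the argument with $h$ as in the displayed inequality.)

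Concretely, first I would fix $\lambda>0$ and set $\varepsilon:=\frac{\zeta_{0}}{\lambda M}>0$. By the definition of the limit $\lim_{s\to\infty}h(s)/s=0$, there is $S=S(\varepsilon)>0$ such that $\frac{h(s)}{s}<\varepsilon$ for every $s>S$; here I only need this one-sided estimate, since in the regime $h(\zeta M)\leq 0$ the claimed inequality is automatic (its left-hand side $\zeta_{0}\zeta$ being positive). Then I would choose $\zeta(\lambda)$ to be any number with $\zeta(\lambda)>S/M$, so that $\zeta(\lambda)M>S$.

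With this choice the estimate $\frac{h(\zeta M)}{\zeta M}<\varepsilon=\frac{\zeta_{0}}{\lambda M}$ gives, upon multiplying through by the positive quantity $\lambda M\,\zeta$, that $\lambda h(\zeta M)<\zeta_{0}\zeta$, which is even slightly stronger than the required $\zeta_{0}\zeta(\lambda)\geq\lambda h(\zeta(\lambda)\|e\|_{\infty})$. I do not expect any genuine obstacle: the entire content is the asymptotic condition $h(s)/s\to 0$ combined with the strict positivity of $\zeta_{0}$. The only points demanding minor care are the sign of $h$ — the passage from the bound on $h(s)/s$ to the conclusion tacitly assumes $h(\zeta M)>0$, so I would explicitly dispose of the trivial case $h(\zeta M)\leq 0$ — and the legitimacy of dividing by $M$, which is ensured because $\|e\|_{\infty}>0$.
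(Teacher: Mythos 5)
Your proof is correct and takes essentially the same route as the paper's: both fix $\varepsilon=\zeta_{0}/(\lambda\|e\|_{\infty})$, use the hypothesis $\lim_{s\to\infty}h(s)/s=0$ to get $h(s)\leq\varepsilon s$ beyond a threshold, and then choose $\zeta(\lambda)$ large enough that $\zeta(\lambda)\|e\|_{\infty}$ exceeds that threshold. Your write-up is in fact a bit more careful than the paper's (explicit threshold $S$, the remark disposing of the case $h(\zeta\|e\|_{\infty})\leq 0$), but the underlying argument is identical.
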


\begin{proof} Since $\lim_{s\rightarrow\infty} \dfrac{h(s)}{s}=0$, follows that $h(s)\leq \varepsilon\, s$. Taking $\varepsilon=\dfrac{\zeta_{0}}{ \lambda ||e||_{\infty}}>0$ and $s=\zeta(\lambda)||e||_{\infty}>A$. Hence, $\zeta(\lambda)\geq \dfrac{A}{||e||_{\infty}}$. Using $h(s)\leq \varepsilon s$, we concluded that
\begin{equation*}
    \lambda h(\zeta(\lambda) ||e||_{\infty})\leq \zeta(\lambda) \zeta_{0}.
\end{equation*}
\end{proof}

\begin{proof} {\rm \bf(Theorem \ref{principal})}. Since $\lim_{s\rightarrow\infty} \dfrac{h(s)}{s}=0$, $\exists a,b>0$ (constants) such that $h(s)-\dfrac{1}{s^{\nu}}<as-b$ (see {\bf Lemma \ref{lemma3.1}}). Let $\lambda_{1}$ (eigenvalue) and $\psi>0$ (eigenfunction) of operator $^{\rm H}\mathfrak{D}_{T}^{\alpha ,\beta ;\psi } \left(^{\rm H}\mathfrak{D}_{0+}^{\alpha ,\beta ;\psi }\theta_{1} \right)$ with Dirichlet boundary conditions. Consider a positive solution $\theta_{1}>0$ of (\ref{eq1}). Then
\begin{eqnarray}\label{eq2.1}
    &&\mathfrak{M}\left(\displaystyle\int_{\Lambda} \left\vert ^{\rm H}\mathfrak{D}_{0+}^{\alpha ,\beta ;\psi }\theta_{1}\right\vert ^{2} d\xi\right)\,\,\int_{\Lambda}\,\, ^{\rm H}\mathfrak{D}_{T}^{\alpha ,\beta ;\psi } \left(^{\rm H}\mathfrak{D}_{0+}^{\alpha ,\beta ;\psi }\theta_{1} \right) \psi\,\, d\xi \notag\\ &&\leq 
    \mathfrak{M}\left(\displaystyle\int_{\Lambda} \left\vert ^{\rm H}\mathfrak{D}_{0+}^{\alpha ,\beta ;\psi }\theta_{1}\right\vert ^{2} d\xi\right)\,\,\int_{\Lambda}\,\, \lambda \left(h(\theta_{1})-\dfrac{1}{\theta_{1}^{\nu}} \right) \psi\,\, d\xi \notag\\
    &&\leq 
    \mathfrak{M}\left(\displaystyle\int_{\Lambda} \left\vert ^{\rm H}\mathfrak{D}_{0+}^{\alpha ,\beta ;\psi }\theta_{1}\right\vert ^{2} d\xi\right)\,\,\lambda\int_{\Lambda}\,\, \left(a \theta_{1}-b\right) \psi\,\, d\xi \notag\\
    &&\leq 
    \lambda \zeta_{\infty}\,\,\int_{\Lambda}\,\, \left(a \theta_{1}-b\right) \psi\,\, d\xi.
\end{eqnarray}

But, not that
\begin{equation*}
    \int_{\Lambda}\,\, ^{\rm H}\mathfrak{D}_{T}^{\alpha ,\beta ;\psi } \left(^{\rm H}\mathfrak{D}_{0+}^{\alpha ,\beta ;\psi }\theta_{1} \right) \psi\,\, d\xi = \int_{\Lambda}\,\, \theta_{1} \left(^{\rm H}\mathfrak{D}_{T}^{\alpha ,\beta ;\psi } \left(^{\rm H}\mathfrak{D}_{0+}^{\alpha ,\beta ;\psi }\psi \right)\right) d\xi = \int_{\Lambda} \lambda_{1} \theta_{1} \psi\,\, d\xi.
\end{equation*}

Thus,{\color{red}
\begin{eqnarray}\label{eq2.1}
\mathfrak{M}\left(\displaystyle\int_{\Lambda} \left\vert ^{\rm H}\mathfrak{D}_{0+}^{\alpha ,\beta ;\psi }\theta_{1}\right\vert ^{2} d\xi\right)\,\,\int_{\Lambda}\,\, \left(\lambda_{1}-\lambda \zeta_{\infty} a\right)\theta_{1} \psi\,\, d\xi\leq 
    \zeta_{\infty}\int_{\Lambda}\,\, \left(-\lambda b\right) \psi\,\, d\xi.
    \end{eqnarray}}

Note that, this is impossible if $\lambda<\dfrac{\lambda_{1}}{\zeta_{\infty} a}$. Therefore, we conclude the first part of the theorem. Now, let $\phi>0$ and $||\phi||_{\infty}=1$ and $\phi=\lambda^{r} \psi^{2/1+\nu}$, where the parameter $r\in \left(\dfrac{1}{1+\nu},1 \right)$. Then, since $^{\rm H}\mathfrak{D}_{0+}^{\alpha ,\beta ;\psi } \phi= \lambda^{r}\, \dfrac{\Gamma(3+\nu/1+\nu)}{\Gamma(3+\nu-\alpha-\alpha\nu/1+\nu)}\,\psi^{\frac{2}{1+\nu}-\alpha}$ (see \cite{J1}) one has
\begin{eqnarray*}
    &&\mathfrak{M}\left(\displaystyle\int_{\Lambda} \left\vert ^{\rm H}\mathfrak{D}_{0+}^{\alpha ,\beta ;\psi }\phi\right\vert ^{2} d\xi\right)\,\,\int_{\Lambda}\,\, ^{\rm H}\mathfrak{D}_{0+}^{\alpha ,\beta ;\psi }\phi \,\,\,^{\rm H}\mathfrak{D}_{0+}^{\alpha ,\beta ;\psi }w\,\, d\xi \notag\\ &&=  \frac{\Gamma(3+\nu/1+\nu)}{\Gamma(3+\nu-\alpha-\alpha\nu/1+\nu)} \mathfrak{M}\left(\displaystyle \int_{\Lambda} \left\vert ^{\rm H}\mathfrak{D}_{0+}^{\alpha ,\beta ;\psi }\phi\right\vert ^{2} d\xi\right)\,\,\int_{\Lambda}\,\, \psi^{\frac{2}{1+\nu}-\alpha}\, \,^{\rm H}\mathfrak{D}_{0+}^{\alpha ,\beta ;\psi }w\,\, d\xi \notag\\ && \leq
    (\zeta_{\infty}-\zeta_{0}) \lambda^{r} \frac{\Gamma(3+\nu/1+\nu)}{\Gamma(3+\nu-\alpha-\alpha\nu/1+\nu)} \int_{\Lambda}\,\, \psi^{\frac{2}{1+\nu}-\alpha}\, \,|^{\rm H}\mathfrak{D}_{0+}^{\alpha ,\beta ;\psi }w|^{2} d\xi.
\end{eqnarray*}

Thus $\psi$ is a sub-solution of problem (\ref{eq1}) if
\begin{eqnarray}
    &&(\zeta_{\infty}-\zeta_{0}) \lambda^{r} \frac{\Gamma(3+\nu/1+\nu)}{\Gamma(3+\nu-\alpha-\alpha\nu/1+\nu)} \int_{\Lambda}\,\, \psi^{\frac{2}{1+\nu}-\alpha}\, \,|^{\rm H}\mathfrak{D}_{0+}^{\alpha ,\beta ;\psi }w|^{2} d\xi \notag \\ &\leq &\int_{\Lambda} \lambda \left(h(\phi)-\frac{1}{\phi^{\nu}} \right)w\,\, d\xi.
\end{eqnarray}

Let $\delta>0$, $\zeta>0$ and $\mu>0$ be such that $|^{\rm H}\mathfrak{D}_{0+}^{\alpha ,\beta ;\psi }w|^{2}\geq \zeta$ in $\overline{\Lambda}_{\delta}$ and $\psi^{2/1+\nu}\in [\mu,1]$ in $\Lambda/\overline{\Lambda}_{\delta}$ where
\begin{equation*}
    \overline{\Lambda}_{\delta}:=\left\{\xi\in\Lambda: d(\xi,\partial\Lambda) \leq \delta \right\},
\end{equation*}
since $|^{\rm H}\mathfrak{D}_{0+}^{\alpha ,\beta ;\psi }\zeta|\neq 0$ on $\partial\Lambda$. Hence, in $\overline{\Lambda}_{\delta}$, if $\lambda\geq 1$, yields
\begin{equation*}
    -\zeta_{0} \lambda^{r}\, \frac{\Gamma(3+\nu/1+\nu)}{\Gamma(3+\nu-\alpha-\alpha\nu/1+\nu)} \psi^{\frac{2}{1+\nu}-\alpha} \,\, |^{\rm H}\mathfrak{D}_{0+}^{\alpha ,\beta ;\psi }w|^{2}\leq \lambda w \left(-\frac{1}{(\lambda^{r} \psi^{\frac{2}{1+\nu}-\alpha})^{\nu}}  \right),
\end{equation*} 
since $1-r-r\nu<0$. Also, in $\overline{\Lambda}_{\delta}$, if $\lambda \geq 1$, one has
\begin{equation*}
    \zeta_{\infty } \lambda^{r}\, \frac{\Gamma(3+\nu/1+\nu)}{\Gamma(3+\nu-\alpha-\alpha\nu/1+\nu)} \psi^{\frac{2}{1+\nu}-\alpha} \,\, |^{\rm H}\mathfrak{D}_{0+}^{\alpha ,\beta ;\psi }w|^{2}\leq \lambda w h\left(\lambda^{r} \psi^{\frac{2}{1+\nu}-\alpha}  \right).
\end{equation*}

Hence, in $\overline{\Lambda}_{\delta}$ follows that
\begin{eqnarray*}
    &&\mathfrak{M}\left(\displaystyle\int_{\Lambda} \left\vert ^{\rm H}\mathfrak{D}_{0+}^{\alpha ,\beta ;\psi }\phi\right\vert ^{2} d\xi\right)\,\,\int_{\Lambda}\,\, ^{\rm H}\mathfrak{D}_{0+}^{\alpha ,\beta ;\psi }\phi \,\,\,^{\rm H}\mathfrak{D}_{0+}^{\alpha ,\beta ;\psi }w\,\, d\xi \notag\\ &&\leq\int_{\Lambda} \lambda w \left[h\left(\lambda \psi^{\frac{2}{1+\nu}-\alpha}\right)-\frac{1}{\left(\lambda^{r} \psi^{\frac{2}{1+\nu}-\alpha} \right)^{\nu}}\right]d\xi\notag\\
    &&= \int_{\Lambda} \lambda \left(h(\phi)-\frac{1}{\phi^{\nu}} \right) w\,\, d\xi.
\end{eqnarray*}

Next in $\Lambda/\overline{\Lambda}_{\delta}$, since $\psi^{\frac{2}{1+\nu}}\geq \mu$, one has
\begin{eqnarray}
\lambda \left(h(\phi)-\frac{1}{\phi^{\nu}} \right)\geq \lambda \left(h(\lambda^{r} \mu)- \frac{1}{(\lambda^{r} \mu)^{\nu}} \right).
\end{eqnarray}

But if $\lambda \geq 1$
\begin{eqnarray}\label{eq3.2}
    &&\mathfrak{M}\left(\displaystyle\int_{\Lambda} \left\vert ^{\rm H}\mathfrak{D}_{0+}^{\alpha ,\beta ;\psi }\phi\right\vert ^{2} d\xi\right)\,\,\int_{\Lambda}\,\, ^{\rm H}\mathfrak{D}_{0+}^{\alpha ,\beta ;\psi }\phi \,\,\,^{\rm H}\mathfrak{D}_{0+}^{\alpha ,\beta ;\psi }w\,\, d\xi 
    \notag\\ && \leq
    (\zeta_{\infty}-\zeta_{0}) \lambda^{r} \frac{\Gamma(3+\nu/1+\nu)}{\Gamma(3+\nu-\alpha-\alpha\nu/1+\nu)} \int_{\Lambda}\,\, \psi^{\frac{2}{1+\nu}-\alpha}\, \,|^{\rm H}\mathfrak{D}_{0+}^{\alpha ,\beta ;\psi }w|^{2} d\xi
    \notag\\ &&\leq \int_{\Lambda} \lambda \left(h(\lambda^{r} \mu)- \frac{1}{(\lambda^{r}\mu)^{\nu}} \right) w\,\, d\xi,
\end{eqnarray}
since $r<1$. Hence, if $\lambda \geq 1$, in $\Lambda/\overline{\Lambda}_{\delta}$, yields
\begin{eqnarray}\label{eq3.3}
    &&\mathfrak{M}\left(\displaystyle\int_{\Lambda} \left\vert ^{\rm H}\mathfrak{D}_{0+}^{\alpha ,\beta ;\psi }\phi\right\vert ^{2} d\xi\right)\,\,\int_{\Lambda}\,\, ^{\rm H}\mathfrak{D}_{0+}^{\alpha ,\beta ;\psi }\phi \,\,\,^{\rm H}\mathfrak{D}_{0+}^{\alpha ,\beta ;\psi }w\,\, d\xi 
    \notag\\&&\leq \int_{\Lambda} \lambda \left(h(\phi)- \frac{1}{(\phi)^{\nu}} \right) w\,\, d\xi.
\end{eqnarray}

Using the inequalities (\ref{eq3.2})-(\ref{eq3.3}), its follows that $\phi$ is a positive sub-solution of (\ref{eq1}).

The next step is to obtain a supersolution $\Xi \geq \phi$.

Since, $\lim_{s\rightarrow\infty} \dfrac{h(s)}{s}=0$, then for all $\lambda>0$, there exists $\zeta(\lambda)>0$ such that $\zeta(\lambda) \geq \lambda h(\zeta(\lambda) ||e||_{\infty})$ (see {\bf Lemma \ref{lemma3.2}}), where $e\in C^{1}(\overline{\Lambda})$ is the unique positive solution of the boundary value problem
\begin{equation*}
    ^{\rm H}\mathfrak{D}_{T}^{\alpha ,\beta ;\psi } \left( ^{\rm H}\mathfrak{D}_{0+}^{\alpha ,\beta ;\psi } e\right)=1,\,\, in\,\, \Lambda=(0,T)\times(0,T)
\end{equation*}
with $e=0$. To discuss our result, it is known that $e>0$ in $\Lambda$ and $\dfrac{\partial e}{\partial \eta}<0$ on $\partial\Lambda$. Let $\Xi:= \zeta(\lambda)e$. Then, follows that
\begin{eqnarray}
    \mathfrak{M}\left(\displaystyle\int_{\Lambda} \left\vert ^{\rm H}\mathfrak{D}_{0+}^{\alpha ,\beta ;\psi }\psi\right\vert ^{2} d\xi\right)\,\,\int_{\Lambda}\,\, ^{\rm H}\mathfrak{D}_{0+}^{\alpha ,\beta ;\psi }\psi \,\,\,^{\rm H}\mathfrak{D}_{0+}^{\alpha ,\beta ;\psi }w\,\, d\xi   &=& \zeta(\lambda) \mathfrak{M}\left(\displaystyle\int_{\Lambda} \left\vert ^{\rm H}\mathfrak{D}_{0+}^{\alpha ,\beta ;\psi }\psi\right\vert ^{2} d\xi\right)\,\, \int_{\Lambda} w\,\, d\xi \notag\\
    &\geq& \int_{\Lambda} (\lambda h(\zeta(\lambda))||e||_{\infty} )w\,\, d\xi
    \notag\\ &\geq&  \zeta(\lambda) \zeta_{0} \int_{\Lambda} w\,\, d\xi
    \notag\\ &\geq& \int_{\Lambda} \lambda \left(h(\Xi)-\frac{1}{\Xi^{\nu}} \right),\,\,\forall w\in \mathcal{W}.
\end{eqnarray}

So, $\Xi$ is a supersolution. So, $\Xi=\zeta(\lambda)\, e \geq \phi$ in $\overline{\Lambda}$, for $\zeta(\lambda)$ large. Therefore, we conclude that for $\lambda\geq 1$, problem (\ref{eq1}) has a positive solution $\theta_{1}\in [\phi, \Xi]$.
\end{proof}

\section*{Acknowledgements}

{\color{red} The author is very grateful to the anonymous reviewers for their useful comments that led to improvement of the manuscript.}

\section*{Data Availability Statement}

Data sharing not applicable to this paper as no data sets were generated or analyzed during the current study.

\section*{Conflicts of Interest} 
The author declare that they have no known competing financial interests or personal relationships that could have appeared to influence the work reported in this paper.



\end{document}